\newcommand{\K}{\mathcal{K}}
\newcommand{\R}{\mathbb{R}}
\newcommand{\integers}{\mathbb{Z}}
\newcommand{\rationals}{\mathbb{Q}}
\newcommand{\boundary}{\partial}
\newcommand{\disjointunion}{\amalg}
\newcommand{\abs}[1]{\left\lvert#1\right\rvert} 
\theoremstyle{plain}
\newtheorem{theorem}{Theorem}[section]
\newtheorem{lemma}[theorem]{Lemma}
\theoremstyle{definition}
\newtheorem{definition}[theorem]{Definition}
\newtheorem{example}[theorem]{Example}
\newtheorem{remark}[theorem]{Remark}
\title{Conormal homology of manifolds with corners}
\author{Thomas Schick}
\email{\href{mailto:thomas.schick@math.uni-goettingen.de}{e-mail:
  thomas.schick@math.uni-goettingen.de}}
\urladdr{\href{http://www.uni-math.gwdg.de/schick}{www:~http://www.uni-math.gwdg.de/schick}}
\address{Mathematisches Institut\\
Universit\"at G{\"o}ttingen\\
Germany}
\author{Mario Vel\'asquez}
\thanks{This work was carried out during a research stay of the second author at
  Univerity of G\"ottingen funded by the German Science Foundation RTG ``Fourier analysis and spectral theory''}
\email{\href{mailto:mavelasquezme@unal.edu.co}{e-mail:mavelasquezme@unal.edu.co}}
\urladdr{\href{https://sites.google.com/site/mavelasquezm/}{www:~https://sites.google.com/site/mavelasquezm/}}\address{Current addresss: Departamento de Matem\'aticas\\Universidad Nacional de Colombia\\Cra. 30 cll 45 - Ciudad Universitaria\\ Bogot\'a, Colombia}
\date{November 2021}
\begin{document}
	\maketitle

\begin{abstract}
   Given a manifold with corners $X$, we associates to it the
   corner structure simplicial complex $\Sigma_X$. Its reduced K-homology is isomorphic
   to the K-theory of the $C^*$-algebra $\K_b(X)$ of b-compact operators on
   $X$. Moreover, the homology of $\Sigma_X$ is isomorphic to the conormal
   homology of $X$.


   In this note, we construct for an arbitrary abstract finite simplicial complex
   $\Sigma$ a manifold with corners $X$ such that $\Sigma_X\cong \Sigma$. As a
   consequence, the homology and K-homology which occur for finite simplicial
   complexes also occur as conormal homology of manifolds with corners and as
   K-theory of their b-compact operators. In particular, these groups can
   contain torsion.
\end{abstract}

\section{Introduction}
In this note we contribute to the index theory and homology of compact manifolds with
corners. More specifically, a fundamental question in this field asks for
obstructions to the Fredholm property for boundary value problems, initiated
in \cite{CL}. Let $X$ be such a manifold with corners. The geometrically
relevant boundary value problems (and their inverses) are contained in the algebra of
b-pseudodifferential operators as introduced by \cite{MelrosePiazza}. Given
such an operator which is \emph{elliptic} (has an elliptic principal symbol)
it is in general not true that the operator is Fredholm.

In this situation, on asks if one can find a smoothing perturbation (possibly after
stabilization) to render the given operator Fredholm. If this is possible, the
operator satisfies the stable Fredholm perturbation property (SFP). As one
can guess, not all operators have the SFP: it is proven in \cite{NSS} that the obstruction to this is precisely
the
boundary analytic index which takes values in $K_*(\K_b(\boundary X))$ where
$\K_b(\boundary X)$ is the $C^*$-algebra of b-compact operators on the
boundary of $X$ (the $C^*$-algebra $\K_b(\boundary X)$ is defined as the quotient $\K_b(X)/\K(X)$, where $\K(X)$ denotes the $C^*$-algebra of compact operators on $L^2(X)$, for details see \cite{CL}). A modern proof of the same fact, using deformation groupoids, is given in
\cite{CLM}. By \cite[Proposition 5.6]{CL}, the restriction to the
boundary induces an isomorphism $K_*(\K_b(X))\to K_*(\K_b(\boundary X))$
provided $\boundary X\ne \emptyset$.

It turns out that the relevant group $K_*(\K_b(X))$ depends only on the
combinatorics of the faces of the manifold with corners $X$ and how they
intersect. Indeed, one essentially can compute these K-groups
combinatorially. To this end, Bunke \cite{Bunke}  introduced the
\emph{conormal homology} of $X$ (called differently in \cite{Bunke}), computed from a chain complex generated
abstractly by the faces of $X$. In \cite{CL}, it is shown that this conormal
homology contains the obstructions to SFP if the corners in $X$ are of
codimension $\le 3$.

More systematically, in \cite{CLV} a natural Chern character
\begin{equation}\label{eq:Chern}
  K_*(\K_b(X))\to H^{cn}_{*+2\integers}(X)\otimes\rationals
\end{equation}
is constructed and proven to be rationally an isomorphism.

In the situation studied in \cite{CL} it is useful if the conormal homology is
torsion free. There and in \cite{CLV}, the authors therefore ask if the
conormal homology always is torsion free, and whether perhaps the Chern
character \eqref{eq:Chern} can be improved to an integral isomorphism.

The main goal of the present paper is to provide examples of manifolds with
corners which show that the conormal homology does not have any of these nice
properties.
Instead, it can be as rich as the homology of an arbitrary finite simplicial
complex. The same applies to the K-theory of $\K_b(X)$.

More specifically, in the present paper, we associate to a  manifold with
embedded corners $X$ its \emph{corner structure complex}, a simplicial complex
$\Sigma_X$ encoding the \emph{corner structure} in such way that the reduced
K-homology of $\Sigma_X$ is isomorphic to the topological K-theory of the
$C^*$-algebra  $\K_b(X)$, and the conormal homology of $X$ isomorphic to the
homology of $\Sigma_X$. We then prove that every finite simplicial complex can
be realized as $\Sigma_X$ of some manifold with embedded corners $X$.

In particular it implies that the conormal homology defined in \cite{CL} and
\cite{CLV} in general does contain torsion.

\section{The corner structure complex of a manifold with corners}

	Let us recall the definition of a smooth manifold with embedded corners. We
        adopt the approach where a manifold with corners is defined as a
        suitable subset of a smooth ordinary manifold (without boundary).

\begin{definition}\label{def:mf_with_corners}
          A compact smooth manifold with embedded corners $X$ is defined in the following
          way. Start with a compact smooth manifold $\widetilde{X}$ (without
          boundary) and with smooth maps $\rho_0,\ldots,\rho_n\colon
          \widetilde{X}\to\mathbb{R}$. Set
    \begin{equation*}
            H_j=\rho_j^{-1}(0)\cap X,\; j=0,\ldots,n, \quad H_{j_1,\dots,j_k}:= H_{j_1}\cap\ldots\cap H_{j_k}.
          \end{equation*}
           such that
          \begin{equation*}
            \{d\rho_{j_1},\ldots,d\rho_{j_k}\} \text{ has maximum rank at each }
            x\in H_{j_1,\dots,j_k}\quad\forall \{j_1,\dots,j_k\}\subset\{0,\dots,n\}
          \end{equation*}
     This defines the manifold with corners $X$ as
		\begin{enumerate}
			\item $X:=\bigcap_{j=1}^n\rho_j^{-1}([0,+\infty))\subseteq\widetilde{X}$.
			\item Each $H_j$ is called a \emph{boundary component
                            of codimension 1}.
                        \item  If $\abs{\{j_1,\dots,j_k\}}=k$ then we call
                          $H_{j_1}\cap\ldots\cap H_{j_k}$ a \emph{face of
                            codimension $k$}.
                        \item We  denote by $F_k$ the set of faces of codimension $k$.
		\end{enumerate}

  Throughout the paper, we assume that  each face $H_{j_1}\cap\ldots\cap
  H_{j_k}$ of arbitrary codimension is connected, in particular each boundary component of codimension 1.
\end{definition}

\begin{definition}\label{simplicial-complex}
		Let $X$ be a manifold with embedded corners with $n+1$ boundary
                components $H_0$,\dots, $H_n$ of codimension $1$. Define the \emph{corner
                  structure complex} as the abstract simplicial complex $\Sigma_X$
                associated to $X$ with vertex set $\{H_0,\ldots,H_n\}$ with
                the following simplices: for $A\subseteq\{H_0,\ldots,H_n\}$
		\begin{center}
			$A\in \Sigma_X$ if and only if $\bigcap_{i\in A}H_i\neq\emptyset$.
		\end{center}
	\end{definition}
	It is clear that $\Sigma_X$ is closed under inclusions and therefore
        is an abstract simplicial complex. Observe that an abstract simplicial complex as considered here is a set, the vertex set (here $\{H_0,\dots,H_n\}$), together with a collection of finite subsets (here $\Sigma_X)$, called the simplices, which is closed under taking subsets (if $\tau\subset \sigma$ then $\tau$ is called a face of $\sigma$).
        
Every abstract simplicial complex $\Sigma$  has a \emph{geometric realization} denoted by $|\Sigma|:= (\bigcup_{\sigma\in\Sigma} \sigma\times \Delta^{|\sigma|-1})/\sim$. Here, $\Delta^k$ is the standard $k$-simplex and we glue according to the face relation in the abstract simplicial complex. For details compare \eqref{eq:simpl_complex} or consult \cite[Theorem. 7.8]{rotman}.

\begin{example}\label{ejem}
		\begin{enumerate}
			\item If $X$ be a smooth manifold without boundary then $\Sigma_X=\emptyset$.
			\item Let $(Y,\partial Y)$ be a connected manifold
                          with non-empty connected boundary. Then $\Sigma_Y$ is a
                          point.
                          
            \item Let $X$ be a connected manifold with boundary with $n$ boundary
  components. Then $\Sigma_X$ is the simplicial complex with $n$ points and no edges.
  \end{enumerate}
                          
\end{example}
More generally we have the following result
\begin{lemma}
  Let $Y_1$ and $Y_2$ be manifolds with embedded corners, then $\Sigma_{Y_1\times Y_2}$ is isomorphic to the join  $\Sigma_{Y_1}*\Sigma_{Y_2}$.
\end{lemma}
\begin{proof}
Recall that the join of two abstract simplicial complexes
    $\Sigma_1,\Sigma_2$ has vertex set the disjoint union
    $V(\Sigma_1)\disjointunion V(\Sigma_2)$, where $V(\Sigma_j)$ is the vertex
    set of $\Sigma_j$ for $j=1,2$. A subset $A\subset V(\Sigma_1)\disjointunion
    V(\Sigma_2)$ is a simplex of $\Sigma_1*\Sigma_2$ if and only if
    $A=A_1\disjointunion A_2$ with $A_1\in \Sigma_1$ and $A_2\in\Sigma_2$.
    
  Let $\widetilde{Y}_1$ be a smooth manifold and let
  $\rho_1,\ldots,\rho_n\colon \widetilde{Y}_1\to\mathbb{R}$ be smooth maps defining the
  manifold with corners $Y_1$, In the same way, let $\widetilde{Y}_2$ and
  $\rho_{n+1},\ldots,\rho_{n+m}
 \colon \widetilde{Y}_2\to \mathbb{R}$ define
  $Y_2$. Then the smooth manifold $\widetilde{Y}_1\times\widetilde{Y}_2$ with
  the smooth maps $\rho_1\circ \pi_1,\ldots,\rho_n\circ\pi_1, \rho_{n+1}
  \circ \pi_2,\ldots,\rho_{n+m}
  \circ \pi_2
  \colon \widetilde{Y}_1\times\widetilde{Y}_2\to\mathbb{R}$ define $Y_1\times
  Y_2$ as a manifold with embedded corners. Here $\pi_j\colon Y_1\times Y_2\to
  Y_j$, $j=1,2$, are the projections. We denote the boundary components of $Y_1$   by $H_j$, $j=1,\ldots,n$ and the boundary components of  $Y_2$ by $H_j$, $j=n+1,\ldots,n+m$. Then the set of boundary components of $Y_1\times Y_2$ is $\{H_1\times Y_2,\ldots, H_n\times Y_2, Y_1\times H_{n+1},\dots,Y_1\times H_{n+m}\}$ and clearly $\Sigma_{Y_1\times Y_2}$ satisfies the conditions to be the join $\Sigma_{Y_1}*\Sigma_{Y_2}$.
\end{proof}

 Note that the geometric realisation of the join of simplicial complexes is
    the topological join of the geometric realisations of the individual
    simplicial complexes.
The above lemma gives more examples:
\begin{example}
\begin{enumerate}
    \item Let $(Y,\partial Y)$ be a connected manifold with non-empty connected  boundary and set $X=Y^n$. This is a manifold with embedded corners and  $\Sigma_X=\Delta_n$ is the $n$-simplex.
\item\label{item:low_d_ex} We can also directly construct an $(n+2)$-dimensional manifold $A$ with
  embedded corners (all of whose faces are connected) such that $\Sigma_A\cong
  \Delta_n$. For this aim, start with $\tilde A:=S^{n+2}$, but decompose $$S^{n+2}=\partial(D^{n+3})=\partial(D^2\times D^{n+1})=S^1\times D^{n+1}\cup_{S^1\times S^n}D^2\times S^n.$$ This way,
  we have an obvious projection map to the second factor $\pi\colon
  S^{n+2}\to D^{n+1}$, with $n+1$ component functions
  $\rho_1,\dots,\rho_{n+1}\colon \tilde A\to \R$. Then we set
  \begin{equation*}
    A:=\bigcup_{j=1}^{n+1} \{\rho_j\ge 0\} = S^1\times Q^{n+1}\cup_{S^1\times
    (S^n\cap Q^{n+1})} D^2\times (S^n\cap Q^{n+1}),
  \end{equation*}
  where we write $Q^{n+1}:=\{ (x_1,\dots,x_{n+1})\in D^{n+1}\mid x_j\ge
  0, \forall j\}$, the positive hyperquadrant sector. Then $A$ is a manifold
  with embedded corners where clearly all the faces are connected  and as $\bigcap_{j=1}^{n+1}\{\rho_j=0\}=S^1\times\{(0,\ldots,0)\}\neq\emptyset$, according to Definition \ref{simplicial-complex} we have $\Sigma_A\cong \Delta_n$, as desired.

\end{enumerate}

\end{example}

Our first main result is constructive: for every finite simplicial complex we can
construct a manifold with corners. A more general result will be proved in Theorem \ref{realization}.

\begin{theorem}\label{theo:main_constr}
  Let $\Sigma$ be an arbitrary finite simplicial complex. Then there exists a
  compact manifold with embedded corners $X$ such that $\Sigma_X$ is isomorphic
  to $\Sigma$.
\end{theorem}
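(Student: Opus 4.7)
The plan is constructive: given a finite simplicial complex $\Sigma$, I will build $X$ by taking one building block per facet and gluing them via corner connected sums. Let $F_1,\dots,F_r$ be the facets of $\Sigma$ and set $N:=2+\max_i \dim F_i$. For each facet $F$, Example~\ref{ejem}(\ref{item:low_d_ex}) supplies a manifold with embedded corners $A_{\dim F}$ realizing $\Delta_F$ with all faces connected and of dimension $\dim F+2$. To equalize dimensions I set $M_F:=A_{\dim F}\times S^{N-\dim F-2}$ when $N-\dim F-2\geq 1$, and $M_F:=A_{\dim F}$ when $N=\dim F+2$. Each $M_F$ then has dimension $N$, its corner structure complex is still the full simplex on the vertex set $F$, and all faces stay connected (the sphere factor, when present, is closed, nonempty, and connected).

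Next, to glue: for each unordered pair of distinct facets $\{F,F'\}$ with $\tau:=F\cap F'\neq\emptyset$, I will perform a \emph{corner connected sum} along the $\tau$-face. Concretely, choose interior points $p$ and $p'$ of the $\tau$-faces of $M_F$ and $M_{F'}$ that avoid all strictly deeper sub-faces and all previously chosen surgery loci. In standard local coordinates, both manifolds look like $\R^{N-\abs{\tau}}\times [0,\infty)^{\abs{\tau}}$ near these points; I remove an open ``corner ball'' $\{\abs{u}^2+\abs{y}^2<\epsilon^2\}$ in each and identify the resulting ``octant-sphere'' boundaries via a diffeomorphism matching the coordinate hyperplane faces $\{y_j=0\}$ for each $v_j\in\tau$. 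A direct local check (already visible in the two-dimensional model) shows that the apparent codimension-$2$ edges produced where the removed ball's boundary meets the old codimension-$1$ faces smooth out upon the identification, so the result is again a manifold with embedded corners.

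Let $X$ be the outcome after carrying this out for all such pairs. Its codimension-$1$ faces are indexed by $V(\Sigma)$: each $H_v$ is the fusion, through the surgeries, of the individual $H_v^{M_F}$'s over all facets $F$ containing $v$. To verify $\Sigma_X\cong\Sigma$: any $x\in X$ is either interior to some $M_F$ outside all surgery regions, in which case the $H_v$'s through $x$ are indexed by a subset of $F$ (hence a face of $\Delta_F\subseteq\Sigma$), or else $x$ lies in a surgery neck for some pair $(F,F')$ and the $H_v$'s through $x$ are indexed by a subset of $\tau=F\cap F'\in\Sigma$; either way the index set is a simplex of $\Sigma$. Conversely, any $\sigma\in\Sigma$ lies in some facet $F$, and the $\sigma$-face of $M_F$ is only punctured by surgeries of codimension $\abs{\tau}\geq\abs{\sigma}$, so it still contributes a nonempty intersection to $\bigcap_{v\in\sigma}H_v$. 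Connectedness of each $\sigma$-face in $X$ follows from connectedness of each individual $\sigma$-face of $M_F$ combined with the observation that, for any two facets $F,F'\supseteq\sigma$, the surgery at $\tau=F\cap F'\supseteq\sigma$ identifies sub-faces sitting inside both of their $\sigma$-faces.

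The main obstacle will be the rigorous verification that the corner connected sum produces a smooth manifold with embedded corners in the sense of Definition~\ref{def:mf_with_corners}: one has to choose the identification diffeomorphism and smooth the defining functions $\rho_v$ across the neck in a compatible way. This is a standard (if somewhat tedious) construction generalizing the boundary connected sum to codimension $\abs{\tau}$. Once it is in hand, the remainder of the argument is combinatorial, and the surgery points can always be chosen generically inside each $\tau$-face to avoid interference between different surgeries.
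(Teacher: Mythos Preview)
Your approach is correct but follows a genuinely different route from the paper's. The paper proceeds by induction on the number of positive-dimensional simplices: starting from a $0$-dimensional complex, it adds one maximal simplex $\sigma\cong\Delta_n$ at a time via a ``pasting lemma'' (Lemma~\ref{pasting lemma}). That lemma glues a model $\Delta_n$-manifold $M$ to the already-constructed $X'$ at $n+1$ separate points---one interior point $x_k$ in each codimension-$n$ face $H_{k^c}=\bigcap_{j\neq k}H_j$ of $X'$, matched to the corresponding face of $M$. The multiplicity of gluing points is what guarantees, step by step, that every face of the new manifold stays connected. By contrast, you take one building block per \emph{facet} and perform a \emph{single} corner connected sum for every pair of facets with nonempty intersection, at a generic interior point of the $\tau$-face where $\tau=F\cap F'$. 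Your connectedness argument is then global rather than inductive: for any $\sigma\in\Sigma$ the facets containing $\sigma$ form a complete graph under ``nonempty intersection'', each such intersection $\tau\supseteq\sigma$, and a surgery at the $\tau$-face joins the two $\sigma$-faces; moreover one checks (as you implicitly do) that whenever a surgery touches a $\sigma$-face one has $N-|\sigma|\ge 2$, so removing the small corner ball never disconnects it. Both arguments rest on the same corner connected sum construction (your ``main obstacle'' is exactly what the paper spells out after Definition~\ref{def:mf_with_corners}), but the paper's inductive scheme is more modular and makes the connectedness verification local to each step, while your facet-by-facet assembly is more direct and typically uses fewer surgeries.
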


  As a preparation for the proof, we  introduce further definitions.

\begin{definition}
  Let $X$ be a manifold with embedded corners and let $x_0\in X$,
  let $V_0$ be a coordinate neighborhood around $x_0$. Then $V_0$ is a
  manifold with embedded corners and corner structure complex $\Sigma_{V_0}$ is called the simplicial complex of $X$ around $x_0$. Given points $x_0\in X$ and $y_0\in Y$, where $X$ and $Y$ are manifolds with embedded corners of the same dimension, we say that \emph{$x_0$ and $y_0$ have the same local corner structure} if the corner structure complexes of $X$ around $x_0$ and of $Y$ around $y_0$ are isomorphic.  
\end{definition}

 Now we will define the connected sum around 0-dimensional submanifolds. This will be
 a key ingredient in the constructions required to prove Theorem \ref{theo:main_constr}.
	
  Let $X$ and $Y$ be manifolds with embedded corners of dimension $n$, let $\{x_1,\ldots, x_m\}\subseteq X$ and $\{y_1,\ldots, y_m\}\subseteq Y$ be finite subsets of $X$ and $Y$, respectively, such that for $i=1,\ldots m$, $x_i$ and $y_i$ have the same local corner structure. 
By results in \cite{MelroseBook} there are tubular neighborhoods $V$ and $W$ with
	$$\{x_1,\ldots,x_m\}\subseteq V\subseteq X\text{ and } \{y_1,\ldots,y_m\}\subseteq W\subseteq Y.$$
	We have
	$$V\cong V_1\sqcup\ldots\sqcup V_m \text{ and }W\cong
        W_1\sqcup\ldots\sqcup W_m,$$
        where $V_i$ is a coordinate neighborhood
        of $x_i$ and $W_i$ is a coordinate neighborhood of $y_i$. Then we can
        choose identifications $$V_i= (-1,1)^{n_i}\times[0,1)^{n-n_i}\subset
        (-1,1)^{n_i}\times (-1,1)^{n-n_i}=: \widetilde{V_i},$$
     and
	$$W_i= (-1,1)^{m_i}\times[0,1)^{n-m_i}\subset\widetilde{W_i}.$$
In these coordinates, the face defining functions $\rho_j$ of Definition
\ref{def:mf_with_corners} are just the coordinate functions for the closed intervals.

Moreover, as $x_i$ and $y_i$ have the same local corner structure, we have $n_i=m_i$.
	
We now follow the description of the ordinary connected sum in \cite{Milnorcs}
to define our connected sum of manifolds with corners. 

\begin{definition}
  Define the \emph{connected sum of $X$ and $Y$ along the subsets
    $\{x_0,\ldots,x_m\}$ and $\{y_0,\ldots,y_m\}$} as follows:
  \begin{equation}
	(X,\{x_0,\ldots,x_m\})\sharp
        (Y,\{y_0,\ldots,y_m\}):=\frac{(X-\{x_0,\ldots,x_m\})\bigsqcup(Y-\{y_0,\ldots,y_m\})}{tz\in
          V_i\sim(1-t)z\in W_i }\label{eq:connect_sum}
\end{equation}
	for every $z\in S^{n-1}$.  When $\{x_0,\ldots,x_m\}$ and
        $\{y_0,\ldots,y_m\}$ are clear from the context, we denote the
        connected sum by $X\sharp Y$.
\end{definition}

  For the sake of completeness, let us give the details how it is an
  straightforward verification that $X\sharp Y$ is a manifold with
  embedded corners. To do so, we have to define the ambient smooth ordinary
  manifold and the boundary defining functions.

  Of course, the ambient manifold for the connected sum is defined as the
  connected sum of the ambient manifolds $\tilde X$ of $X$ and $\tilde Y$ of
  $Y$, defined precisely with the same formula as \eqref{eq:connect_sum}.

  The boundary defining function $\rho_j$ remains unchanged outside the set
  $V_i\sqcup W_i$ but have to be modified in the
  coordinate regions where the identifications are carried out. We redefine $\rho_j$ on
$V_i\sqcup W_i$ by
\begin{equation*}
  \rho_j(x):=\phi_x(\abs{x})\cdot x_j;  
\end{equation*}
with and $\phi_x\colon (0,1)\to (0,1)$ a smooth monotonously decreasing function with
\begin{equation*}
  \phi_x(t) =
  \begin{cases}
    1; & t> 0.6\\
    \frac{1-t}{t}; & t<0.4
  \end{cases}
\end{equation*}
In the coordinates of $W_i$, we have for $y\in W_i$ that $y\sim x:=
\frac{1-\abs{y}}{\abs{y}} y\in V_i$ and therefore
\begin{equation*}
  \rho_j(y) =
  \begin{cases}
    \frac{1-\abs{y}}{\abs{y}} y_j; & 1-\abs{y}>0.6 \equiv \abs{y}<0.4\\
    \frac{1-(1-\abs{y})}{\abs{y}} \frac{1-\abs{y}}{\abs{y}} y_j=y_j & \abs{y}>0.6
  \end{cases}
  \end{equation*}
We observe that we indeed defined a smooth function on the connected sum of
$\tilde X$ and $\tilde Y$ as the expressions are unchanged on the ``outer''
part of $U_i$ or $V_i$, where the norm is $>0.6$.

\begin{remark}
  The connected sum of two manifolds with corners $X$ and $Y$  will again be a manifold
  with corners. However, even if all faces of arbitrary codimension of $X$ and
  of $Y$ are connected, in general this will not be the case for $X\sharp Y$.
\end{remark}

\begin{lemma}\label{pasting lemma}
  Assume that $X$ is a compact manifold of dimension $d$ with embedded corners
  (all faces connected). Assume we have an embedding $\iota \colon \boundary
  \Delta_n\hookrightarrow \Sigma_X$, but the simplex spanned by the vertices in the
  image of $\iota$ is not contained in $\Sigma_X$.

  Then for each $d'\ge \max\{d, n+2\}$ there is a compact smooth manifold $Z$
  with embedded corners (all faces connected) such that $\Sigma_Z\cong
  \Sigma_X\cup_{\iota}\Delta_{n}$, i.e.~$\Sigma_Z$ is isomorphic to the
  simplicial complex obtained from $\Sigma_X$ by adding the simplex spanned by
  the image of $\iota$.
\end{lemma}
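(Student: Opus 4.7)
My plan is to build $Z$ as an iterated connected sum of $X$ with the model manifold $A$ from Example~\ref{ejem}(\ref{item:low_d_ex}), which has dimension $n+2$, satisfies $\Sigma_A\cong\Delta_n$, and has all faces connected. To match dimensions I would first stabilize by replacing $X$ and $A$ by products $X\times M$ and $A\times N$ with connected closed manifolds $M,N$ of the appropriate dimensions; such products preserve both the corner structure complex and the connectedness of every face, since every face of $X\times M$ is the product of a face of $X$ with $M$.

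Let $H_{i_0},\dots,H_{i_n}$ be the boundary components of $X$ corresponding to $\iota(v_0),\dots,\iota(v_n)$, and let $H'_0,\dots,H'_n$ be those of $A$. For each non-empty proper subset $S\subsetneq\{0,\dots,n\}$, the hypothesis on $\iota$ guarantees that $\bigcap_{j\in S}H_{i_j}\neq\emptyset$, and the analogous face of $A$ is non-empty by inspection; I would choose interior points $x_S$ and $y_S$ of the corresponding open codimension-$|S|$ strata in $X$ and $A$ respectively. Their local corner structures agree, so the iterated connected sum
\[
Z := (X,\{x_S\}_S)\,\sharp\,(A,\{y_S\}_S)
\]
indexed over all non-empty proper $S$ is well-defined.

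After the gluing, the codimension-$1$ boundary components of $Z$ are the merged $\tilde H_j := H_{i_j}\cup H'_j$ for $j=0,\dots,n$, together with the boundary components of $X$ outside the image of $\iota$ (unchanged). For any $T\subseteq\{0,\dots,n\}$ the intersection $\bigcap_{j\in T}\tilde H_j$ contains the non-empty $A$-side $\bigcap_{j\in T} H'_j$, so every subset of $\{\tilde H_0,\dots,\tilde H_n\}$ becomes a simplex of $\Sigma_Z$, precisely adjoining $\Delta_n$. For a $T$ involving some unmerged boundary component of $X$ only the $X$-side contributes (the gluing points lie only in merged components), so $T\in\Sigma_Z$ iff $T\in\Sigma_X$. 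This gives $\Sigma_Z\cong\Sigma_X\cup_\iota\Delta_n$.

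The hard part will be preserving the connectedness of every face. A single codimension-$1$ gluing per merged pair would leave each intermediate face $\bigcap_{j\in T}\tilde H_j$ (for $2\le|T|\le n$) split between an $X$-side and an $A$-side, because a codimension-$1$ gluing point belongs to no higher-codimension face. Gluing at a codimension-$|S|$ point for every non-empty proper $S$ resolves this: the identified point $x_S=y_S$ lies in $\bigcap_{j\in S}\tilde H_j$ and therefore bridges the two sides of every face $\bigcap_{j\in T}\tilde H_j$ with $T\subseteq S$. The top face $\bigcap_{j=0}^n\tilde H_j$ reduces to the $A$-side (an $S^1$), which is already connected, so no gluing point is needed there; faces that mix merged and unmerged components agree with their $X$-side and remain connected by hypothesis on $X$.
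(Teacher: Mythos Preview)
Your approach is correct and follows the same strategy as the paper: stabilize dimensions, take a model manifold $A$ with $\Sigma_A\cong\Delta_n$, and perform connected sums at points of matching local corner type so that the corresponding faces of $X$ and $A$ get merged and remain connected.

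The only difference is in the choice of gluing points. You glue at one point $x_S$ for \emph{every} non-empty proper $S\subsetneq\{0,\dots,n\}$, i.e.\ at $2^{n+1}-2$ points of codimensions $1$ through $n$. The paper observes that it suffices to glue only at the $n+1$ codimension-$n$ points $x_k\in\bigl(\bigcap_{j\ne k}H_{i_j}\bigr)^\circ$: any proper subset $T\subsetneq\{0,\dots,n\}$ misses some vertex $k$, so the face $\bigcap_{j\in T}\tilde H_j$ already passes through the gluing region at $x_k$ and is bridged there. Your extra gluings at lower codimension are harmless (each affected face still has dimension $\ge 2$, so nothing gets disconnected), but they are redundant; the paper's $n+1$ points are the minimal set that does the job.
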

\begin{proof}
  We write $\Delta_n=\mathcal{P}(\{0,\dots,n\})$, the power set of $\{0,\dots,n\}$.
  Taking the product with a smooth connected manifold, a process which does
  not change the corner structure, we can assume that $\dim(X)=d'$. Let
  $H_k\subset X$ be the boundary component which corresponds to $\iota(k)$
  for $k=0,\dots,n$, and set 
  \begin{equation*}
    H_{k^c}:= \bigcap_{j\ne k} H_j,
  \end{equation*}
  the face of $X$ of codimension $n$ corresponding to
  $\{0,\dots,n\}\setminus\{k\}\subset \boundary\Delta_n$. By the assumption
  that $\iota(\boundary\Delta_n)\subset \Sigma_X$ we have $H_{k^c}\ne
  \emptyset$ and $\dim(H_{k^c})=d'-(n-1)>0$ for all $k$. Pick then $x_k\in
  (H_{k^c})^\circ$. The local corner structure around $x_k$ is precisely
  the $(n-1)$-simplex spanned by $\{0,\dots,n\}\setminus\{k\}$.

Choose a $d'$-dimensional manifold $M$ with embedded corners (all faces
connected) with a fixed isomorphism $\Delta_n\xrightarrow{\cong}\Sigma_M$. For this, start with $A$ of
Example \ref{ejem} (\ref{item:low_d_ex}) and take the Cartesian product with a
connected smooth manifold to adjust the dimension. Given the fixed isomorphism
$\Sigma_M\cong\Delta_n$, let $H_k^M$ be the boundary face corresponding to the
vertex $k$ of $\Delta_n$ and set $H^M_{k^c}:=\bigcap_{j\ne k} H^M_j$. Note
that $\dim(H^M_{k^c})=d'-n>0$ and pick $y_k\in (H_{k^c}^M)^\circ$, with local
corner structure the $(n-1)$-simplex spanned by $\{0,\dots,n\}\setminus
\{k\}$. 

Define now $Z:= (X; x_0,\dots,x_n)\# (M;y_0,\dots,y_n)$, using at each point
the given identifications of the local corner structure with a determined
$(n-1)$-dimensional subsimplex of $\Delta_n=\mathcal{P}(\{0,\dots,n\})$. This
way, the face $H_k$ is glued near the $n$ points $x_j$ with
$j\in\{0,\dots,n\}\setminus\{k\}$ to the face $H_k^M$. Moreover, for each
$0\le k_0<\dots<k_\alpha\le n$ the face
$H_{k_0}\cap\dots H_{k_\alpha}$ is glued (at $n-\alpha$ points) to the
intersection face $H^M_{k_0}\cap\dots \cap H^M_{k_\alpha}$. It follows that
each face of arbitrary codimension of $Z$ remains connected. The boundary
faces of $Z$ are in obvious bijection with those of $X$ and the corner complex
is a simplicial complex with vertex set the one of $\Sigma_X$. As
$H_0^M\cap\dots H_n^M\ne \emptyset$, taking everything together, we now get
$\Sigma_Z=\Sigma_X\cup_{\iota(\boundary\Delta_n)}\iota(\Delta_n)$. 
  \end{proof}

\begin{theorem}\label{realization}
	Let $K$ be a finite simplicial complex and let $n$ be the maximal
        dimension of a simplex in $K$. If $d\ge n+2$ then there is a compact
        $d$-dimensional manifold with embedded corners $X$ (all faces
        connected) such that $$\Sigma_X\cong K.$$ 
\end{theorem}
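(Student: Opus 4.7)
The plan is to build $X$ inductively, attaching one simplex of $K$ at a time to a base manifold using the pasting lemma (Lemma~\ref{pasting lemma}) as the workhorse.

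First I would set up the base case. Let $\{v_0,\dots,v_N\}$ be the vertex set of $K$. Construct a connected $d$-dimensional smooth manifold with (connected codimension-one) boundary having exactly $N+1$ boundary components; a concrete choice is $S^d$ with $N+1$ disjoint open $d$-balls removed, together with defining functions $\rho_i$ each vanishing on one boundary sphere. By Example~\ref{ejem}(6) the corner structure complex of this $X_0$ is the 0-skeleton of $K$ (all vertices, no higher simplices), and all faces are connected.

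Next, enumerate the positive-dimensional simplices of $K$ as $\sigma_1,\dots,\sigma_M$ in order of nondecreasing dimension, and proceed inductively. Assume $X_{i-1}$ has been constructed as a compact $d$-dimensional manifold with embedded corners (all faces connected) such that $\Sigma_{X_{i-1}}$ is the sub-complex of $K$ spanned by the vertices together with $\sigma_1,\dots,\sigma_{i-1}$. Because $K$ is closed under taking faces and the $\sigma_j$ are listed by nondecreasing dimension, every proper face of $\sigma_i$ is already a simplex of $\Sigma_{X_{i-1}}$; thus, writing $k=\dim\sigma_i$, there is an embedding $\iota\colon \boundary\Delta_k\hookrightarrow \Sigma_{X_{i-1}}$ while $\sigma_i$ itself is missing. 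Since $d\ge n+2\ge k+2$ and $\dim X_{i-1}=d$, the hypothesis $d\ge \max\{d,k+2\}$ of Lemma~\ref{pasting lemma} is satisfied, and the lemma produces $X_i$ of dimension $d$, with all faces connected and $\Sigma_{X_i}\cong \Sigma_{X_{i-1}}\cup_{\iota}\Delta_k$. After $M$ iterations, $X:=X_M$ satisfies $\Sigma_X\cong K$.

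The induction is essentially forced by the pasting lemma, so the main things to double-check are organizational rather than deep: that the enumeration in order of nondecreasing dimension guarantees $\boundary\sigma_i\subseteq \Sigma_{X_{i-1}}$ at every step, that the dimension condition $d\ge n+2$ suffices uniformly for all $\sigma_i$, and that the base manifold $X_0$ genuinely fits Definition~\ref{def:mf_with_corners} with connected faces. Each of these is routine; the nontrivial content of the theorem lives entirely in the pasting lemma already established.
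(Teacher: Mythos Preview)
Your proof is correct and follows essentially the same strategy as the paper: induct on the number of positive-dimensional simplices, realize the $0$-skeleton directly (the paper cites an Example for this, you give an explicit $S^d$ minus balls), and then repeatedly apply Lemma~\ref{pasting lemma} to attach one simplex at a time. The only cosmetic difference is that the paper phrases the induction by removing a maximal-dimensional simplex from $K$, while you build up by adding simplices in nondecreasing dimension; these are equivalent orderings and your version makes the verification that $\boundary\sigma_i\subset\Sigma_{X_{i-1}}$ slightly more transparent.
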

\begin{proof}
	The result follows by induction on the number of positive dimensional
        simplices in $K$.
	
	By Example \ref{ejem} (3) any 0-dimensional simplicial complex
        satisfies the result.

        If $K$ contains a positive dimensional simplex, let $\sigma$ be one
        such of maximal dimension. Then $K=K'\cup_{\boundary\sigma}
        \sigma$. By induction, there exists the required $X'$ with $\Sigma_{X'}\cong K'$
        and by Lemma \ref{pasting lemma} we then can also construct the
        required $X$ with $\Sigma_X\cong K$.
\end{proof}
      
\section{Groupoids and the space $O_X$}
Let $X$ be a manifold with embedded corners. In \cite{CLV}, for sufficiently
large $m$ a non-compact
topological space $O_X$ is introduced such that we have a Connes-Thom
isomorphism 

$$CT\colon K_*(C^*(\K_b(X)))\xrightarrow{\cong} K^{m+*}(O_X).$$

The space $O_X$ is constructed as the orbit space of a free and proper groupoid. We recall this construction briefly.

Let $X$ be defined by the smooth manifold $\widetilde{X}$ and the defining
functions $\rho_1,\dots,\rho_n$, i.e.
$$\bigcap_{j=1}^n \{\rho_j\ge 0\} =: X\subseteq\widetilde{X}.$$

The puff groupoid is defined as a subgroupoid of $\tilde X\times\tilde
X\times\R^n$, where $\tilde X\times \tilde X$ is the arrow space of the
pair groupoid and where $\R^n$ is the additive group (groupoid with one
object). The arrow space of the puff groupoid is then defined as
$$G(\widetilde{X},(\rho_i)):=\{(x,y,\lambda_1,\ldots,\lambda_n)\in
\widetilde{X}\times\widetilde{X}\times\R^n\mid
\rho_i(x)=e^{\lambda_i}\rho_i(y)\}.$$
Denote by $G_c(\widetilde{X},(\rho_i))$ the s-connected component of $G(\widetilde{X},(\rho_i))$.

Choose for sufficiently large an embedding $$\iota\colon
\widetilde{X}\to\R^{m-n}$$.

Using this embedding, in \cite[Section 3]{CLV} the authors construct a new groupoid
$$\R^m\rtimes_\iota G_c(\widetilde{X},(\rho_i)_i)\rightrightarrows\R^m\times
\widetilde{X}.$$
By \cite[Proposition 3.1]{CLV} this groupoid is a free and proper Lie groupoid
(this uses that all faces of each codimension are connected). Hence the orbit
space
$$O_{\widetilde{X}}=Orb(G_c(\widetilde{X},(\rho_i)_i)\rtimes_h\R^M) =
\R^m\times \tilde X/\sim$$
has a natural structure of smooth manifold. Decomposing $v\in\R^m$ as $v=(v',v'')\in\R^{m-n}\times\R^n$ set
\begin{align*}
\widetilde{\rho}_i\colon O_{\widetilde{X}}&\to\R\\
[(v,x)]&\to\rho_i(x)e^{v_i''}\notag.
\end{align*}
By \cite[Section 3]{CLV}, these maps are indeed well defined and determine a
manifold with corners $O_X$:
\begin{definition}
	We denote by $$O_X=\bigcap_{i=1}^n\{\widetilde{\rho}_i\geq0\}.$$
\end{definition}
In \cite{CLV} is verified that $O_X$ is manifold with embedded corners with defining functions $\widetilde{\rho}_1,\ldots,\widetilde{\rho}_n$.

The main result in \cite{CLV} is the following.
\begin{theorem}\label{CLV}
	There is a Connes-Thom isomorphism
	$$CT_h\colon K_*(C^*(\K_b(X)))\to K^{m+*}(O_X).$$
        Here $K^*(O_X)$ denotes topological K-theory with compact support.
      \end{theorem}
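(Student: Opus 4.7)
The plan is to prove the theorem in three main steps that together implement the general Debord--Skandalis philosophy: rewrite $C^*(\K_b(X))$ in groupoid form, use the $\R^m$-action to apply a Connes--Thom isomorphism, and then exploit freeness/properness of the resulting groupoid to identify its $C^*$-algebra with continuous functions on the orbit space. The degree shift by $m$ matches the degree shift of the Connes--Thom isomorphism, which is a good sanity check for the whole setup.

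More concretely, I would first identify $C^*(\K_b(X))$ with the (reduced) $C^*$-algebra of the $s$-connected puff groupoid $G_c(\widetilde{X},(\rho_i))$ restricted to the subset $X \subseteq \widetilde{X}$. This is the standard groupoid model of Melrose's b-calculus: the convolution algebra of the puff groupoid houses the b-pseudodifferential operators, and its ideal of compact operators recovers $\K_b(X)$. Second, the embedding $\iota$ produces an $\R^m$-action and hence a semidirect groupoid $\R^m\rtimes_\iota G_c$. At the $C^*$-level this is a crossed product by $\R^m$, and the classical Connes--Thom isomorphism for $\R^m$-actions provides a natural map
\begin{equation*}
K_*(C^*(G_c))\xrightarrow{\cong} K_{*+m}(C^*(G_c)\rtimes\R^m),
\end{equation*}
shifting the degree by $m$. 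Third, by \cite[Proposition 3.1]{CLV}, the groupoid $\R^m\rtimes_\iota G_c$ is free and proper; a free proper Lie groupoid is Morita equivalent to its orbit space, so its $C^*$-algebra is Morita equivalent to $C_0(O_{\widetilde{X}})$, giving
\begin{equation*}
K_{*+m}(C^*(G_c)\rtimes\R^m)\cong K_{*+m}(C_0(O_{\widetilde{X}}))=K^{*+m}(O_{\widetilde{X}}).
\end{equation*}

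The final step is to cut down from $\widetilde{X}$ to $X$. The key observation is that the functions $\widetilde{\rho}_i$ on $O_{\widetilde{X}}$ were precisely engineered to be the orbit space analogues of the $\rho_i$, so that the inclusion $O_X\hookrightarrow O_{\widetilde{X}}$ corresponds at each step of the chain above to the inclusion $X\hookrightarrow\widetilde{X}$. Chasing the restriction through all three isomorphisms, which are each natural in the obvious way, yields the desired isomorphism $CT_h\colon K_*(C^*(\K_b(X)))\to K^{*+m}(O_X)$ in compactly supported K-theory.

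The main obstacle, I expect, is not any single one of these three steps individually, but rather verifying compatibility of the restriction from $\widetilde{X}$ to $X$ across the Morita equivalences and the Connes--Thom map. In particular, one must check that the orbit space structure on $O_X$ computed from $\widetilde{\rho}_i\ge 0$ really matches what one gets by restricting the free proper groupoid $\R^m\rtimes_\iota G_c$ to the appropriate invariant subset, and that under this restriction the Connes--Thom construction still applies. The use of the $s$-connected component of the puff groupoid (rather than the full puff groupoid), which is what makes the freeness/properness hypothesis hold, is a subtle point that must be tracked carefully through the whole argument.
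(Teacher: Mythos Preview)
The paper does not prove this theorem at all: it is stated as ``The main result in \cite{CLV} is the following'' and is simply quoted from that reference, with no proof given here. So there is no proof in the present paper to compare your proposal against.

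That said, your outline is a faithful reconstruction of the strategy actually used in \cite{CLV}: model $\K_b(X)$ via the (restricted, $s$-connected) puff groupoid, apply the Connes--Thom isomorphism for the $\R^m$-action coming from the embedding $\iota$, and then use that the semidirect groupoid $\R^m\rtimes_\iota G_c$ is free and proper to get Morita equivalence with $C_0$ of the orbit space. Your identification of the delicate point---tracking the restriction from $\widetilde{X}$ to $X$ through all three steps and the role of $s$-connectedness---is accurate; in \cite{CLV} this is handled by working with saturated open subgroupoids and checking that the defining functions $\widetilde{\rho}_i$ descend as claimed. Nothing in your sketch is wrong, but since the present paper treats the result as a black box, a full verification would require going to \cite{CLV} rather than to this paper.
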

      
\section{Relation between $\Sigma_X$ and $O_X$}
In \cite[Section 3C]{CLV} the authors construct a filtration of the space $O_X$,
$$Y_0\subseteq Y_1\subseteq\ldots\subseteq Y_m=O_X,$$
moreover, in \cite[Proposition 3.27]{CLV} they prove for each $q$ that 
$$Y_q\setminus Y_{q-1}\cong\bigcup_{\stackrel{f\in \text{faces of }X}{ \  N-q \le \mathrm{codim}(f)\le N}}\R^{m}_f.$$ Here $\R^m_f$ is certain subspace of $\R^m$, and the construction implies that there is a homeomorphism from $O_X$  to a subspace of $\R^m$ which is entirely determined by the combinatorics of
the corner structure of $X$. We will now describe in detail this homeomorphism.

Canonically, $\tilde O_X$ is defined as subset of $\R^{m-|V|}\times
\R^{V}$ where $V$ is the set of boundary faces of $X$, i.e.~the vertex set
of $\Sigma_X$. We denote the standard basis
  vectors $\delta_H\in(\R_{\ge 0})^{V}$ for $H\in V$.
 Note that then $|\Delta(V)|\subset \R^V$ is the convex hull of the
 $\delta_H$, $H\in V$.

To define $\tilde O_X$, for every face $F$ of $X$ set
\begin{equation*}
  B_F:=\{x\in \R^{V}\mid x_H=0\text{ if }F\subset H, x_H>0\text{ if }F\cap
  H=\emptyset\iff F\nsubseteq H\}.
\end{equation*}
Recall that we have a correspondence between the faces $F$ and the subsets
$\sigma\in \Sigma_X\subset\mathcal{P}(V)$, namely $F=\bigcap_{H\in \sigma}
H$, and $B_F$ is the open positive quadrant ``spanned'' by all $\delta_H$ with
$H\notin \sigma$.

Define then
\begin{equation*}
  \tilde O_X:=\R^{m-|V|}\times \bigcup_{F\text{ face of }X} B_F\subset
  \R^{m-|V|}\times \R^V
\end{equation*}
equipped with the subspace topology. Note that, ass sets, the union is disjoint.
The computation in \cite[Section 3]{CLV} then gives a homeomorphism between
$O_X$ and $\tilde O_X$.

Note that $\Sigma_X=\Delta(V)$ is the full simplex spanned by $V$ if and only
if $\tilde O_X=\R^{m-|V|}\times (\R_{\ge 0})^{V}$. Otherwise, $0\notin \tilde
O_X$ and we have a homeomorphism
\begin{equation*}
  \tilde O_X \cong \R^{m-|V|}\times \R_{>0} \times P_X
\end{equation*}
where the factor $\R_{>0}$ is the norm (radial variable) and
\begin{equation*}
  P_X = |\Delta(V)|\cap \tilde O_X\subset (\R_{\ge 0})^{V}.
\end{equation*}

  Observe that, if $F=\bigcap_{H\in \sigma} H$ then $B_F\cap|\Delta(V)|$ is
  the interior of the convex hull $F_{V\setminus\sigma}$ of the $\delta_H$ for $H\notin \sigma$:
  \begin{equation*}
    B_F\cap|\Delta(V)| = (F_{V\setminus\sigma})^\circ.
  \end{equation*}

  Now
  \begin{equation*}
    |\Delta(V)|^\circ \subset P_X\subset |\Delta(V)|,
  \end{equation*}
i.e.~$|\Delta(V)|$ is a compactification of $P_X$. This implies that we get
for the one-point compactification $(P_X)^+$
\begin{equation*}
  (P_X)^+ \cong |\Delta(V)|/(|\Delta(V)|\setminus P_X),
\end{equation*}
i.e.~we identify all the missing points in the compactification $|\Delta(V)|$
of $P_X$ to one point.

We next show that $|\Delta(V)|\setminus P_X$ is the geometric realization of
the dual of the simplicial complex $\Sigma_X$. First recall that the geometric
realization of a simplicial complex $K$ with vertex set $V$ can be defined as follows:

If $I\subseteq V$,  set
$$F_I:= \text{closed convex hull of }\delta_V,\;v\in H .$$
Then
\begin{equation}\label{eq:simpl_complex}
    |K|=\bigcup_{I\in K}F_I = \disjointunion_{I\in K} (F_K)^\circ \subset \R^K.
\end{equation}
Here we define the open face $F_I^\circ:=F_I\setminus \boundary F_I$, in
particular $F_{\{H\}}=\{\delta_H\}$ is the singleton (not the empty set).

Let us now also recall the definition of the dual of a simplicial complex.
\begin{definition}
	Let $K$ be a simplicial complex with vertex set $V$ . Then the
        (Alexander) dual of $K$ is
        $$K^\vee=\{A\subset V\mid V-A\notin K\}.$$ 
\end{definition}

Note that, in particular,
\begin{equation}\label{eq:simplex_full}
  |\Delta(V)| = \disjointunion_{\sigma\subset V} (F_\sigma)^\circ.
\end{equation}

\begin{lemma}
  We have
  \begin{equation*}
    |\Sigma_X^\vee| =|\Delta(V)|\setminus P_X.
  \end{equation*}
\end{lemma}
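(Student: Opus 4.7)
The plan is to match the stratification of $|\Delta(V)|$ into open simplices against the natural stratification of $P_X$, and then read off the complement as the geometric realization of the dual complex by the very definition of $\Sigma_X^\vee$.

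First I would recall the decomposition \eqref{eq:simplex_full}, $|\Delta(V)| = \disjointunion_{\tau\subset V} (F_\tau)^\circ$, which partitions $|\Delta(V)|$ into open faces. Second I would invoke the identity stated just above the lemma: if $F = \bigcap_{H\in\sigma} H$ is the face of $X$ corresponding to $\sigma\in\Sigma_X$, then $B_F \cap |\Delta(V)| = (F_{V\setminus\sigma})^\circ$. Since $P_X = |\Delta(V)| \cap \tilde O_X = |\Delta(V)| \cap \bigcup_{F} B_F$ with $F$ ranging over all faces of $X$, and since faces of $X$ are in canonical bijection with simplices of $\Sigma_X$, this yields
\[
P_X \;=\; \disjointunion_{\sigma\in\Sigma_X} (F_{V\setminus\sigma})^\circ,
\]
the disjointness being a consequence of the partition property of the open faces.

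Taking the complement inside $|\Delta(V)|$ and substituting $\tau := V\setminus\sigma$, I obtain
\[
|\Delta(V)|\setminus P_X \;=\; \disjointunion_{\tau\subset V\,:\,V\setminus\tau\notin \Sigma_X} (F_\tau)^\circ.
\]
By the definition of the Alexander dual, the indexing set on the right is precisely $\Sigma_X^\vee$. It then remains to check that the resulting disjoint union equals $|\Sigma_X^\vee|$, which requires $\Sigma_X^\vee$ to be closed under taking subsets: if $A\in\Sigma_X^\vee$ and $B\subseteq A$, then $V\setminus B\supseteq V\setminus A\notin \Sigma_X$, so because $\Sigma_X$ is itself closed under subsets we have $V\setminus B\notin \Sigma_X$ and hence $B\in\Sigma_X^\vee$. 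Granted this, the standard stratification $|\Sigma_X^\vee| = \disjointunion_{\tau\in\Sigma_X^\vee} (F_\tau)^\circ$ finishes the argument.

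I do not expect a genuine obstacle — the proof is pure bookkeeping in the chain of bijections between faces of $X$, simplices of $\Sigma_X$, and open faces of $|\Delta(V)|$. The one point that demands care is the degenerate simplex $\sigma = \emptyset\in\Sigma_X$ (corresponding to the face $F = X$ itself), which contributes the open top-dimensional cell $(F_V)^\circ = |\Delta(V)|^\circ$ to $P_X$ and correctly forces $V\notin\Sigma_X^\vee$; this is consistent with the earlier remark that $|\Delta(V)|^\circ \subset P_X$.
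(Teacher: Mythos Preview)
Your proof is correct and follows essentially the same route as the paper: decompose $|\Delta(V)|$, $P_X$, and $|\Sigma_X^\vee|$ into open faces indexed respectively by all $\tau\subset V$, by those with $V\setminus\tau\in\Sigma_X$, and by those with $V\setminus\tau\notin\Sigma_X$, and observe that the latter two partition the first. Your additional checks (that $\Sigma_X^\vee$ is closed under subsets, and the role of $\sigma=\emptyset$) are not in the paper's proof but are harmless clarifications.
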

\begin{proof}
  By definition,
  \begin{equation}\label{eq:dual_complex}
    |\Sigma_X^\vee|=\disjointunion_{\sigma\in\Sigma_X^\vee}(F_{\sigma})^\circ
                   =\disjointunion_{V\setminus\sigma\notin \Sigma_X}
                     (F_{\sigma})^\circ
  \end{equation}
  On the other hand, as observed above,
  \begin{equation}\label{eq:PX}
    P_X = \disjointunion_{\sigma\in \Sigma_X} (F_{V\setminus\sigma})^\circ =
    \disjointunion_{V\setminus\sigma\in\Sigma_X} (F_\sigma)^\circ.
  \end{equation}
  The three decompositions \eqref{eq:simplex_full}, \eqref{eq:dual_complex}
  and \eqref{eq:PX} prove the claim.
\end{proof}

Then we have proved the following result.
\begin{theorem}
	Let $X$ be a manifold with embedded corners (all faces connected) such
        that $\Sigma_X$ is not the full simplex on the vertex set $V$ with
        embedding $\tilde X\to \R^{m-|V|}$ as above. Then there
        is a homeomorphism
        $$O_X\cong\R^{m-|V|+1}\times(|\Delta(V)|\setminus |\Sigma_X^\vee|),$$
        where $|\Sigma_X^\vee|$ denotes the geometric realization of the dual of $\Sigma_X$.
      \end{theorem}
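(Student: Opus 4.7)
The plan is to assemble the theorem directly from the three ingredients already made explicit in this section, treating it essentially as a bookkeeping step.

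First I would recall from the cited \cite[Prop.~3.4]{CLV} the explicit homeomorphism $O_X \cong \tilde O_X$, so that everything can be done inside $\R^{m-|V|}\times\R^V$. Then I would exploit the hypothesis that $\Sigma_X$ is not the full simplex on $V$: this is precisely what guarantees $0\notin \tilde O_X$, because there must be some face $F$ of $X$ and a vertex $H\in V$ with $F\not\subset H$, so every point of $B_F$ has a strictly positive coordinate. Consequently, on the second factor we can pass to polar coordinates, writing a nonzero point $x\in\tilde O_X\cap(\{0\}\times\R^V)$ as $(r,x/\lVert x\rVert_1)$ with $r=\lVert x\rVert_1\in\R_{>0}$ and $x/\lVert x\rVert_1\in P_X=|\Delta(V)|\cap\tilde O_X$. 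The key point is that each $B_F$ is a positive-linear cone, hence closed under scaling, so this radial decomposition restricts to a homeomorphism
\begin{equation*}
\tilde O_X \;\cong\; \R^{m-|V|}\times \R_{>0}\times P_X.
\end{equation*}

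Second, I would substitute the identification $|\Sigma_X^\vee|=|\Delta(V)|\setminus P_X$ from the preceding lemma, which rewrites $P_X$ as $|\Delta(V)|\setminus|\Sigma_X^\vee|$. Third, the factor $\R_{>0}$ is homeomorphic to $\R$ (for instance via the logarithm), which absorbs into the Euclidean factor to yield $\R^{m-|V|+1}$. Stringing these homeomorphisms together gives exactly the claimed formula
\begin{equation*}
O_X \;\cong\; \R^{m-|V|+1}\times\bigl(|\Delta(V)|\setminus|\Sigma_X^\vee|\bigr).
\end{equation*}

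I do not anticipate a real obstacle here: the substantive content has already been packaged into the cone decomposition of $\tilde O_X$ and the identification of $|\Delta(V)|\setminus P_X$ with $|\Sigma_X^\vee|$. The only point that deserves a careful sentence, rather than being glossed over, is why the radial decomposition is actually a global homeomorphism of $\tilde O_X\cap(\{0\}\times\R^V)$ onto $\R_{>0}\times P_X$, and this is a one-line consequence of the fact that each stratum $B_F$ is invariant under multiplication by positive scalars and meets $|\Delta(V)|$ in the open face $(F_{V\setminus\sigma})^\circ$, so polar coordinates land precisely in $P_X$ and cover it.
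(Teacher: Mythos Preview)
Your proposal is correct and follows exactly the route the paper takes: the theorem is stated there without a separate proof, as the immediate combination of the homeomorphism $O_X\cong\tilde O_X$ from \cite[Prop.~3.4]{CLV}, the radial decomposition $\tilde O_X\cong\R^{m-|V|}\times\R_{>0}\times P_X$ (valid because $0\notin\tilde O_X$ when $\Sigma_X\ne\Delta(V)$), and the preceding lemma $P_X=|\Delta(V)|\setminus|\Sigma_X^\vee|$. One tiny wording slip: your justification for $0\notin\tilde O_X$ should say that for \emph{every} face $F$ there exists $H\in V$ with $F\not\subset H$ (since no $\sigma\in\Sigma_X$ equals $V$), not merely that some such pair exists.
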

      
To prove the main result of this note we need to recall the Spanier-Whitehead duality theorem, for a proof see \cite{adams}.

\begin{theorem}
	Let $E^*$ be a generalized cohomology theory with dual homology theory
        $E_*$. let $|K|\subsetneq|\partial\Delta(V)|$ be the geometric
        realization of a simplicial complex with vertex set $V$ consisting of
        $|V|$ elements, so that $\dim(\boundary|\Delta(V)|)= |V|-2$. Then
        there is a canonical isomorphism
        $$\widetilde{E}^r(|K|)\cong\widetilde{E}_{|V|-3-r}(|K^\vee|).$$  
\end{theorem}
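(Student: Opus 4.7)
The plan is to reduce this to the classical Spanier-Whitehead (equivalently Alexander) duality on the sphere $|\boundary\Delta(V)|\cong S^{|V|-2}$. Two ingredients are needed: (i) a combinatorial identification, up to homotopy, of $|K^\vee|$ with the complement $|\boundary\Delta(V)|\setminus|K|$ in this sphere, and (ii) generalized Alexander duality for the cohomology theory $E^*$ applied inside $S^{|V|-2}$.

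For (i), I would work in the barycentric subdivision of $\Delta(V)$. Each simplex $\tau$ of $\Delta(V)$ gives a barycenter $\hat\tau$, and $|\boundary\Delta(V)|$ decomposes canonically into the open dual cells of its simplices. A point of $|\boundary\Delta(V)|$ lies in $|K|$ exactly when its carrying open simplex is a face of $K$. The straight-line deformation retraction pushing away from the barycenters $\hat\tau$ with $\tau\in K$ retracts the complement onto the subcomplex of the barycentric subdivision spanned by the barycenters $\hat\tau$ with $\tau\notin K$. Under the bijection $\tau\leftrightarrow V\setminus\tau$, this subcomplex is the order complex of the poset of non-faces of $K$, which is precisely the barycentric subdivision of $|K^\vee|$ by the defining property $A\in K^\vee\iff V\setminus A\notin K$.

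For (ii), I would invoke Spanier-Whitehead duality on $S^n$ with $n=|V|-2$: for any compact CW subspace $C\subset S^n$, there is a canonical duality pairing between the suspension spectra $\Sigma^\infty C_+$ and $\Sigma^\infty(S^n\setminus C)_+$ sitting inside $\Sigma^\infty S^n_+$, inducing a natural isomorphism between the reduced $E$-cohomology of $C$ and the reduced $E$-homology of the (appropriately suspended) complement in the asserted degree. This is the content cited from \cite{adams}; its proof constructs the pairing on cells and verifies the duality by induction over the CW structure, using that $S^0$-duality holds trivially and that cofiber sequences are preserved.

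Composing (i) and (ii) with $C=|K|$ yields the stated isomorphism. The step I expect to require most care is the first one: to obtain a \emph{canonical} (not merely existential) isomorphism, the deformation retraction must be implemented via the barycentric structure rather than ad hoc, so that the duality pairing on $S^{|V|-2}$ transports naturally through the homotopy equivalence to $|K^\vee|$. Once this is arranged, naturality in $K$ follows automatically because both the retraction and the Spanier-Whitehead pairing are natural constructions.
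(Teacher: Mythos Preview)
The paper does not prove this statement: it is introduced with ``for a proof see \cite{adams}'' and then used as a black box. There is thus no argument in the paper to compare your proposal against; you have supplied what the authors chose to cite.

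Your outline is the standard proof of combinatorial Alexander duality and is sound. Step~(i)---retracting $|\boundary\Delta(V)|\setminus|K|$ inside the barycentric subdivision onto the full subcomplex spanned by the barycenters $\hat\tau$ with $\tau\notin K$, and identifying this with the barycentric subdivision of $K^\vee$ via $\tau\leftrightarrow V\setminus\tau$---is exactly the classical argument. Step~(ii) is then Alexander duality on $S^{|V|-2}$ for a generalized theory, which is what the cited reference supplies. One caution on the bookkeeping: Alexander duality on $S^n$ reads $\widetilde E^r(C)\cong\widetilde E_{n-1-r}(S^n\setminus C)$, so with $n=|V|-2$ your argument actually produces $\widetilde E_{|V|-3-r}(|K^\vee|)$ rather than the displayed $\widetilde E_{|V|-2-r}(|K^\vee|)$. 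A small test case (e.g.\ $|K|$ two points in $|\boundary\Delta_2|\cong S^1$) confirms the shift, so when you write this up do not simply assert that you recover ``the asserted degree'' without checking.
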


Applying this to K-theory and to $|\Sigma_X|$ we obtain the following theorem.
      
\begin{theorem}
	Let $X$ be a manifold with embedded corners with associated simplicial
        complex $\Sigma_X\ne \Delta(V)$ on the vertex set $V$. Then we have a canonical isomorphism
	$$K_*(C^*(\K_b(X)))\to \widetilde{K}_{-*-1}(|\Sigma_X|).$$
\end{theorem}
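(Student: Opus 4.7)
The plan is to chain together all the isomorphisms collected in the previous sections and finish with Spanier--Whitehead duality. I would proceed in the following order.

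First, by Theorem \ref{CLV} the Connes--Thom isomorphism gives
$K_*(C^*(\K_b(X)))\cong K^{m+*}(O_X)$, with K-theory taken with compact supports. Next I apply the previous theorem (the one identifying $O_X$) to rewrite
\begin{equation*}
	K^{m+*}(O_X)\cong K^{m+*}\bigl(\R^{m-|V|+1}\times(|\Delta(V)|\setminus|\Sigma_X^\vee|)\bigr).
\end{equation*}
Since compactly supported K-theory of $\R^k\times Y$ is $\widetilde K^*((\R^k\times Y)^+)=\widetilde K^*(\Sigma^k Y^+)$, the Bott/suspension isomorphism reduces this to $\widetilde K^{*+|V|-1}(U^+)$, where $U=|\Delta(V)|\setminus|\Sigma_X^\vee|$ and $U^+$ is its one-point compactification.

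Second, I would identify $U^+$ with the quotient $|\Delta(V)|/|\Sigma_X^\vee|$; this is exactly the remark already made in the text that $(P_X)^+\cong|\Delta(V)|/(|\Delta(V)|\setminus P_X)$, combined with the previous lemma that $|\Delta(V)|\setminus P_X=|\Sigma_X^\vee|$. Because $|\Delta(V)|$ is contractible, the cofibre sequence $|\Sigma_X^\vee|\hookrightarrow|\Delta(V)|\to|\Delta(V)|/|\Sigma_X^\vee|$ together with the long exact sequence in reduced K-theory yields
\begin{equation*}
	\widetilde K^{*+|V|-1}\bigl(|\Delta(V)|/|\Sigma_X^\vee|\bigr)\cong \widetilde K^{*+|V|-2}\bigl(|\Sigma_X^\vee|\bigr).
\end{equation*}

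Third, Spanier--Whitehead duality, as recalled in the excerpt, applies to $|\Sigma_X^\vee|\subsetneq|\boundary\Delta(V)|$ (this strict inclusion holds because $V\notin \Sigma_X^\vee$, as $\emptyset\in\Sigma_X$), and identifies $\widetilde K^{r}(|\Sigma_X^\vee|)$ with $\widetilde K_{|V|-2-r}(|(\Sigma_X^\vee)^\vee|)=\widetilde K_{|V|-2-r}(|\Sigma_X|)$ using the involutivity $(\Sigma_X^\vee)^\vee=\Sigma_X$. Setting $r=*+|V|-2$ gives
\begin{equation*}
	\widetilde K^{*+|V|-2}\bigl(|\Sigma_X^\vee|\bigr)\cong \widetilde K_{-*}\bigl(|\Sigma_X|\bigr),
\end{equation*}
and composing all the displayed isomorphisms produces the asserted canonical map $K_*(C^*(\K_b(X)))\to\widetilde K_{-*}(|\Sigma_X|)$.

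The main obstacle is essentially bookkeeping: one needs to verify that the dimension shifts add up correctly (the contributions $+m$ from Connes--Thom, $-(m-|V|+1)$ from the Euclidean suspension, $-1$ from the cofibre sequence, and $|V|-2-r$ from Spanier--Whitehead) and that the hypotheses of Spanier--Whitehead duality genuinely apply, i.e.\ that $|\Sigma_X^\vee|$ is strictly contained in $|\boundary\Delta(V)|$. The latter uses only that $\Sigma_X\ne\Delta(V)$ and that $\emptyset\in\Sigma_X$, so no further subtlety arises; canonicity of the final isomorphism is inherited from the canonicity of each step in the chain.
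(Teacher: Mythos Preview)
Your proposal is correct and follows essentially the same route as the paper's proof: Connes--Thom to $K^{m+*}(O_X)$, the identification $(O_X)^+\simeq S^{m-|V|+1}\wedge(|\Delta(V)|/|\Sigma_X^\vee|)$, the suspension isomorphism, the boundary map of the cofibre sequence (using contractibility of $|\Delta(V)|$), and finally Spanier--Whitehead duality. Your added remarks on the index bookkeeping and on the involutivity $(\Sigma_X^\vee)^\vee=\Sigma_X$ are helpful elaborations; the only small imprecision is that the \emph{strictness} of $|\Sigma_X^\vee|\subsetneq|\partial\Delta(V)|$ is not witnessed by $V\notin\Sigma_X^\vee$ (that only gives containment in $\partial\Delta(V)$) but rather by the fact that every vertex $\{H\}$ lies in $\Sigma_X$, so $V\setminus\{H\}\notin\Sigma_X^\vee$.
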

\begin{proof}
  We already know by Theorem \ref{CLV} that
  $$K_*(C^*(\K_b(X)))\to K^{m+*}(O_X).$$
  On the other hand, by definition
  $$K^*(O_X)\cong \widetilde{K}^*((O_X)^+),$$
  where $(O_X)^+$ is the one-point compactification. But
  $$O_X^+\cong S^{m-|V|+1}\wedge P_X^+\cong
  S^{m-|V|+1}\wedge\left(|\Delta(V)|/(|\Delta(V)|\setminus
    P_X)\right)=S^{m-|V|+1}\wedge\left(|\Delta(V)|/(|\Sigma_X^\vee|)\right).$$

  We therefore get
  \begin{equation*}
    \begin{split}
      K^{m+*}(\tilde O_X) &= \widetilde K^{m+*}((\tilde O_X)^+) \\
                        & \cong \widetilde
                        K^{*+|V|-1}(|\Delta(V)|/|\Sigma_X^\vee|)\\
                        &\cong \widetilde K^{*+|V|-2}(|\Sigma_X^\vee|)\\
                        &\cong \widetilde K_{-*-1}(|\Sigma_X|).
    \end{split}
  \end{equation*}
  Here, the first isomorphism is the definition of compactly supported
  K-theory, the second is the suspension isomorphism, the third is the
  boundary map in the long exact pair sequence, using that $|\Delta(V)|$ is
  contractible and the last is Spanier-Whitehead duality.
\end{proof}

We get the corresponding result for conormal homology $H_*^{cn}(-)$ defined in \cite{CL}. To prove this, one
could construct a direct correspondence between the chain complex which
defines conormal homology and the simplicial chain complex of $\Sigma_X$. We
use the shortcut that in \cite[Corollary 4.2]{CLV} it is already established that $H^{cn}_*(X)\cong
H^{m-*}(O_X)$. Combined with the argument above, applied to ordinary
(co)homology instead of K-theory, we obtain the final result of this note.

\begin{theorem}
Let $X$ be a manifold with embedded corners (all faces connected) with
associated simplicial complex $\Sigma_X$. Then we have an isomorphism 
$$H^{cn}_*(X)\to \widetilde{H}_{*-1}(|\Sigma_X|).$$
\end{theorem}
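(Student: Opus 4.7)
The plan is to mirror the proof of the preceding K-theory theorem, replacing topological K-theory with ordinary (co)homology at every step. The role of the Connes-Thom isomorphism (Theorem \ref{CLV}) is played by the identification $H^{cn}_*(X)\cong H^{m-*}(O_X)$ from \cite[Corollary 4.2]{CLV}, which is the only input specific to conormal homology; from there the argument is purely topological and can be copied almost verbatim.

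More explicitly, I would chain together the same four isomorphisms used above, but evaluated on ordinary (co)homology. First, compactly supported cohomology equals reduced cohomology of the one-point compactification, so $H^{m-*}(O_X)\cong \widetilde H^{m-*}((O_X)^+)$. Second, the already-established homeomorphism $(O_X)^+\cong S^{m-|V|+1}\wedge(|\Delta(V)|/|\Sigma_X^\vee|)$ together with the suspension isomorphism gives $\widetilde H^{|V|-1-*}(|\Delta(V)|/|\Sigma_X^\vee|)$. Third, since $|\Delta(V)|$ is contractible, the connecting map in the long exact sequence of the pair $(|\Delta(V)|,|\Sigma_X^\vee|)$ rewrites this as $\widetilde H^{|V|-2-*}(|\Sigma_X^\vee|)$. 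Fourth, Spanier--Whitehead duality for ordinary (co)homology with $r=|V|-2-*$ produces $\widetilde H_{*}(|\Sigma_X|)$.

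Since every step is formal once the K-theory version has been carried out, the only real task is verifying that the degree shifts compose to give exactly $\widetilde H_*$ in the target; one checks $(m-*)-(m-|V|+1)-1=|V|-2-*$ before the duality, and then Spanier--Whitehead flips this to $(|V|-2)-(|V|-2-*)=*$. A direct chain-level proof, constructing an explicit comparison between the conormal chain complex of \cite{CL} and the reduced simplicial chain complex of $\Sigma_X$, would be conceptually more satisfying; but it would require a careful combinatorial matching of generators (faces of $X$ versus simplices of $\Sigma_X$) and boundary maps (with signs), whereas the topological shortcut sidesteps this entirely. The degenerate case $\Sigma_X=\Delta(V)$ may need to be handled separately, as is done implicitly in the K-theory version, but there $|\Sigma_X|$ is contractible so the right side vanishes, and a direct inspection of the conormal chain complex confirms the left side vanishes too.
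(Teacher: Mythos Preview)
Your proposal is correct and follows exactly the approach the paper takes: invoke \cite[Corollary 4.2]{CLV} to get $H^{cn}_*(X)\cong H^{m-*}(O_X)$ and then rerun the chain of isomorphisms from the K-theory proof (compactly supported cohomology, suspension, pair sequence, Spanier--Whitehead duality) with ordinary (co)homology in place of K-theory. You even anticipate the paper's remark that a direct chain-level comparison is possible but that the topological shortcut is preferable, and your degree bookkeeping is accurate.
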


In particular, Theorem \ref{realization} implies that for every finite
abelian group there are examples such 
conormal homology contains that torsion group.

\bibliographystyle{plain}
\bibliography{simplicial} 
\end{document}